\newtheorem{theorem}{Theorem}[section]
\newtheorem{lemma}[theorem]{Lemma}
\theoremstyle{definition}
\newtheorem{remark}[theorem]{Remark}
\newtheorem{example}[theorem]{Example}
\newtheorem*{remark*}{Remark}
\newtheorem{question}[theorem]{Question}
\newcounter{RomanNumber}
\newcommand{\MyRoman}[1]{\setcounter{RomanNumber}{#1}\Roman{RomanNumber}}
\newcommand{\vees}[1]{\ensuremath{\mathop{\bigvee}\limits_}}
\newcommand{\bigopluss}[1]{\ensuremath{\mathop{\bigoplus}\limits_}}
\newcounter{bean}
\newcommand{\larrow}{\relbar\!\!\relbar\!\!\rightarrow}
\newcommand{\llarrow}{\relbar\!\!\relbar\!\!\larrow}
\newcommand{\lllarrow}{\relbar\!\!\relbar\!\!\llarrow}
\begin{document}
\begin{sloppypar}

\title[Sphere bundles after looping]{Sphere bundles over $4$-manifolds are trivial after looping} 

\author{Ruizhi Huang} 
\address{State Key Laboratory of Mathematical Sciences \& Institute of Mathematics, Academy of Mathematics and Systems Science, 
   Chinese Academy of Sciences, Beijing 100190, China} 
\email{huangrz@amss.ac.cn} 
   \urladdr{https://sites.google.com/site/hrzsea/}
   \thanks{}

\subjclass[2010]{Primary 
55P15, 
55P35, 
57R19; 
Secondary 
55P10, 
55Q52, 
55P40. 
}
\keywords{}
\date{}


\begin{abstract} 
We show that except two special cases, the sphere bundle of a vector bundle over a simply connected $4$-manifold splits after looping. In particular, this implies that though there are infinitely many inequivalent sphere bundles of a given rank over a $4$-manifold, the loop spaces of their total manifolds are all homotopy equivalent.
\end{abstract}

\maketitle

\section{Introduction}
Sphere bundles over $4$-manifolds are classical and important in topology. 
There are many famous results especially when the base manifold is the $4$-sphere.
For instance, in differential topology, Milnor \cite{Mil56} found his exotic $7$-sphere as the total manifold of a $3$-sphere bundle over the $4$-sphere. The total manifolds of general $S^3$-bundles over $S^4$ were eventually classified by Crowley and Escher \cite{CE03} in various categories. In homotopy theory, in their remarkable work \cite{JW54, JW55}, James and Whitehead studied deeply the fibrewise homotopy classification of general sphere bundles over spheres.
In contrast, there are much fewer investigations on sphere bundles over general $4$-manifolds, the topology of which should be much harder.

On the other hand, loop space decompositions provide powerful tools for studying the homotopy properties of manifolds. For example, Beben and Theriault \cite{BT14} studied the loop decompositions of $(n-1)$-connected $2n$-manifolds, while Beben and Wu \cite{BW15} and Huang and Theriault \cite{HT22} investigated those of $(n-1)$-connected $(2n+1)$-manifolds. Their loop decompositions allow for the computation of the homotopy groups of these manifolds\footnote{Also studied by Samik Basu and Somnath Basu \cite{Bas19, BB18} from a different perspective.} from those of spheres and Moore spaces. Furthermore, a theoretical method of loop space decomposition was developed by Beben and Theriault \cite{BT22} and was significantly expanded by Theriault in \cite{The24a}. This method has been applied to investigate the homotopy of various geometric structures, including connected sums \cite{Che22, Che23a, Che23b, HT23, The24a}, surgeries \cite{HT23}, open books \cite{HT24a}, blow ups \cite{HT24b} and certain polyhedral products \cite{ST24}. It is also crucial in the study of inert top cell attachments of orientable closed manifolds \cite{Hua24, The24b}. For a comprehensive introduction to loop space decompositions of manifolds and their applications, see the recent book by Huang and Theriault \cite{HT25}.

\medskip

\noindent{\bf Main Results.}\quad In this paper, we study the homotopy theory of sphere bundles over general simply connected $4$-manifolds from the loop viewpoint. The topology of circle bundles over $4$-manifolds has been studied by \cite{DL05}, from which Beben and Theriault \cite{BT14} have determined their loop homotopy types. 
The homotopy theory of $2$-sphere bundles was studied by the author recently \cite{Hua22}.
Our main theorem below determines the loop homotopy types of sphere bundles over simply connected $4$-manifolds for almost all other cases. In particular, it illustrates that though there are infinitely many inequivalent sphere bundles of a given rank over a $4$-manifold, the loop spaces of their total manifolds are all homotopy equivalent.

\begin{theorem}\label{decomthm}
Let $d$ and $n$ be nonnegative integers such that $n\geq 2$, and $(d, n)\neq (0, 2)$, $(0,3)$. Let $N$ be a simply connected closed $4$-manifold such that $H^2(N;\mathbb{Z})\cong \mathbb{Z}^{\oplus d}$. Let
\[
S^{n}\stackrel{}{\longrightarrow} M\stackrel{}{\longrightarrow} N
\]
be the sphere bundle of a rank $(n+1)$ vector bundle over $N$. Then the sphere bundle splits after looping to give a homotopy equivalence
\[
\Omega M\simeq \Omega S^{n}\times \Omega N.
\]
Moreover, 
\begin{itemize}
\item[(1).] if $d=0$,
$
\Omega M\simeq S^3\times \Omega S^{n}\times \Omega S^7;
$
\item[(2).] if $d=1$, 
   $ 
   \Omega M\simeq S^1\times \Omega S^{n}\times \Omega S^5;
   $ 
\item[(3).] if $d\geq 2$,
  $
   \Omega M\simeq S^1\times \Omega S^{n}\times\Omega (S^2\times S^3)\times  \Omega\big(J\vee(J\wedge\Omega (S^2\times S^3))\big),
 $
where $J=\mathop{\bigvee}\limits_{i=1}^{d-2}(S^2\vee S^3)$.
   \end{itemize}
\end{theorem}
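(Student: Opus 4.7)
The plan is to reduce the splitting $\Omega M\simeq\Omega S^n\times\Omega N$ to the single statement that the connecting map $\partial\colon\Omega N\to S^n$ in the Puppe extension of the fibre sequence $S^n\to M\to N$ is null-homotopic. Once $\partial$ is null, the fibration $S^n\to M\to N$ admits a section after looping, and the induced map $\Omega S^n\times\Omega N\to\Omega M$ is a homotopy equivalence.

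\textbf{Step 1: Homotopy type and loop decomposition of $N$.} By the homotopy classification of simply connected closed $4$-manifolds, $N$ is homotopy equivalent to $\bigl(\bigvee^{d}S^2\bigr)\cup_f D^4$, with the attaching map $f\colon S^3\to\bigvee^{d}S^2$ determined by the intersection form. For $d=0$ we have $N\simeq S^4$, and the quaternionic Hopf fibration combined with James' splitting yields $\Omega S^4\simeq S^3\times\Omega S^7$. For $d=1$ we have $N\simeq\mathbb{CP}^2$, and the Hopf fibration $S^1\to S^5\to\mathbb{CP}^2$ gives $\Omega\mathbb{CP}^2\simeq S^1\times\Omega S^5$. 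For $d\geq 2$ the intersection form contains a hyperbolic summand, and a Beben--Theriault style argument produces the stated decomposition of $\Omega N$ into the three factors $S^1$, $\Omega(S^2\times S^3)$, and $\Omega\bigl(J\vee(J\wedge\Omega(S^2\times S^3))\bigr)$.

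\textbf{Step 2: Nullity of the connecting map on each factor.} The $S^1$ factor maps trivially to $S^n$ since $\pi_1(S^n)=0$ for $n\geq 2$. The $S^3$ factor of $\Omega S^4$ (case $d=0$) requires the vanishing of a class in $\pi_3(S^n)$; this holds for $n\geq 4$ but fails in general for $n=2,3$, which is precisely the reason those two cases must be excluded. For the remaining loop factors such as $\Omega S^5$, $\Omega S^7$, $\Omega(S^2\times S^3)$, and the wedge-loop factor in case (3), I would use the James model $\Sigma\Omega\Sigma X\simeq\bigvee_{k\geq 1}\Sigma X^{\wedge k}$ and the loop/suspension adjunction to replace maps into $S^n$ by maps out of wedges of smash products; a dimension/connectivity count relative to the target $S^n$ then rules out nontrivial contributions, provided the excluded cases are avoided.

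\textbf{Main obstacle.} The hardest step is the nullity check for the large factor $\Omega\bigl(J\vee(J\wedge\Omega(S^2\times S^3))\bigr)$ in case (3), which carries cells in arbitrarily high dimensions, so a pure connectivity argument does not close the case. My plan here is to realise $\partial$ as the transgression of the classifying map $N\to BSO(n+1)$ and to exploit the cellular structure of $N$: the Beben--Theriault decomposition encodes how the $4$-cell of $N$ is attached, and the characteristic classes of a rank-$(n+1)$ bundle interact with these attaching cells in a way that produces only null composites into $S^n$ once $n\geq 2$ and $(d,n)\neq(0,2),(0,3)$. Assembling this null-homotopy with the loop decompositions of $N$ obtained in Step 1 then yields the explicit products stated in (1), (2) and (3).
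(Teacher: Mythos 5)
Your overall reduction is fine in principle: for the homotopy fibration $S^n\to M\to N$, nullity of the connecting map $\partial\colon\Omega N\to S^n$ does yield a section of $\Omega p$ and hence $\Omega M\simeq\Omega S^n\times\Omega N$, and your Step 1 identifications of $\Omega N$ agree with what the paper quotes from Beben--Theriault. The gap is in Step 2. First, verifying that $\partial$ is null on each factor of a product decomposition of $\Omega N$ does not show that $\partial$ is null: $S^n$ is not an H-space (apart from $n=3,7$), so restrictions to products of factors carry Whitehead-product-type cross terms that a factor-by-factor check never sees. Second, and more seriously, the proposed ``dimension/connectivity count'' cannot work even on a single factor: $\pi_k(S^n)$ is nonzero for infinitely many $k>n$, so for instance in the case $d=1$ the restriction of $\partial$ to the bottom cell $S^4\subset\Omega S^5$ lies in $\pi_4(S^n)$, which is $\mathbb{Z}/2$ for most $n$, and no connectivity argument forces it to vanish. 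The vanishing is not formal; it has to come from the bundle data, and that is exactly what your sketch defers to the vague phrase about realising $\partial$ as a transgression of the classifying map. As stated, Step 2 does not close the cases $d=1$ and $d\ge 2$ at all.

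The paper supplies precisely this missing input, by arguments that never compute $\partial$. For $d\ge 2$ it shows (Lemmas 3.1--3.4) that the classifying map $N\to BSO(n+1)$ factors through a degree one map $h\colon N\to Q$ with $Q=(S^2\vee S^2)\cup e^4$ chosen using the class $\alpha$ detecting $\omega_2(\xi)$; it then splits the pulled-back sphere bundle $\widetilde{Q}\to Q$ after looping, using $\Omega Q\simeq\Omega(S^2\times S^2)$ and the universal property of $\Omega\Sigma$ to build a section of $\Omega\widetilde{p}$, and transports a left inverse of the looped fibre inclusion back to $M$ through the bundle map. For $d=1$ it pulls the bundle back along the Hopf projection $S^5\to\mathbb{C}P^2$ and proves the pullback is trivial (the composite $S^5\to\mathbb{C}P^2\to S^4$ is null because $Sq^2=0$ on $H^4(\mathbb{C}P^3;\mathbb{Z}/2\mathbb{Z})$, combined with the decomposition of the classifying map in Lemma 2.2), so $X\cong S^n\times S^5$ and the splitting is read off from the resulting diagram of bundles. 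For $d=0$, where $n\ge 4$, it simply uses $\pi_3(S^n)=0$ to get a right homotopy inverse of $p$ itself. Without one of these geometric mechanisms, or some equally concrete analysis of how the classifying map kills the obstructions, your plan remains a strategy rather than a proof.
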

The theorem will be proved by the three indicated cases in Sections \ref{sec: d>1}, \ref{sec: d=1} and \ref{sec: d=0} respectively. 
As an immediate consequence of Theorem \ref{decomthm}, we have an isomorphism of homotopy groups
\[
\pi_\ast(M)\cong \pi_\ast(S^n)\oplus \pi_\ast(N),
\]
where $\pi_\ast(N)$ can be expressed as a direct sum of homotopy groups of spheres by the further decompositions in the theorem (cf. \cite[Introduction]{BT14}). 
Note that this isomorphism can not be obtained directly from the long exact sequence of the homotopy groups of the sphere bundle. 

Similarly, we have an isomorphism of homology groups
\[
H_\ast(\Omega M;\mathbb{Z})\cong H_\ast(\Omega S^n;\mathbb{Z})\otimes H_\ast(\Omega N;\mathbb{Z}),
\]
where $H_\ast(\Omega S^n;\mathbb{Z})$ is a rank $1$ tensor algebra by the Bott-Samelson theorem \cite{BS53}, and $H_\ast(\Omega N;\mathbb{Z})$ was computed by Sa. Basu and So. Basu \cite[Theorem 4.1]{BB18} as a quadratic algebra. Further, as Example \ref{Hnonex} illustrates, in general the homology of the sphere bundle does not split before looping
\[
H_\ast(M;\mathbb{Z})\not\cong H_\ast( S^n;\mathbb{Z})\otimes H_\ast(N;\mathbb{Z}).
\]
It is worth noting that a homology isomorphism arising from a loop space decomposition does not, in general, preserve the multiplicative structure. Nonetheless, determining the homology algebra of loop spaces remains a fundamental problem in algebraic topology. 

Note that there are two cases not considered in the theorem. These are the sphere bundles of the form
\[
S^{n}\stackrel{}{\longrightarrow} M\stackrel{}{\longrightarrow} S^4
\]
with $n=2$ or $3$. When the bundle does not have a cross section, partial results on the loop homotopy of $M$ were obtained by the author \cite{Hua22} for $n=2$ and by Theriault and the author \cite{HT22} for $n=3$. Based on the results there, it can be realized that these two cases are quite different from the other cases when $n\geq4$. In general, when $n=2$ or $3$ the sphere bundle does not split after looping, and there are torsions in the loop homology of $M$. 
More details can be found in a summary in Section \ref{sec: d=0}.

\medskip

\noindent{\bf Future Questions.}\quad There are several directions for further exploration. 

Firstly, the method presented in this paper is not applicable to non-simply connected $4$-manifolds. It would be interesting to explore new methods for investigating the loop homotopy of sphere bundles over non-simply connected $4$-manifolds.
\begin{question}
Let $N$ be a non-simply connected $4$-manifold with nilpotent fundamental group. For a positive integer $n$, classify the loop homotopy type of the sphere bundles of rank $(n+1)$ vector bundles over $N$. 
\end{question}

Furthermore, the main result of the paper implies that the function
\[
\Omega S: \frac{\{\text{rank $(n+1)$ vector bundles over $N$}\}}{\cong}\llarrow  \frac{\{\text{homotopy fibrations over $\Omega N$}\}}{\simeq},
\]
which sends a bundle to the loop of its sphere bundle, is trivial for most simply connected $4$-manifolds $N$. This leads to the following general question.
\begin{question}
Let $n$ be a positive integer. 
\begin{itemize}
\item[(1).] For which manifolds $N$ is the function $\Omega S$ trivial?
\item[(2).] For which manifolds $N$ does the function $\Omega S$ have a finite image?
\end{itemize}
\end{question}

Additionally, it is well-known that characteristic classes serve as obstructions to trivializing bundles, and can provide complete invariants in certain cases. Inspired by this, it is possible to study the loop homotopy of sphere bundles from the perspective of characteristic classes.  
\begin{question}
Let $\xi$ and $\eta$ be two vector bundles over a fixed manifold $N$. If $\Omega S(\xi)=\Omega S(\eta)$, what is the relation between the characteristic classes of $\xi$ and $\eta$?
\end{question}

\bigskip

\noindent{\bf Acknowledgement} The author was supported in part by the National Natural Science Foundation of China (Grant nos. 12331003 and 12288201), the National Key R\&D Program of China (No. 2021YFA1002300), the Youth Innovation Promotion Association of Chinese Academy Sciences, and the ``Chen Jingrun'' Future Star Program of AMSS. 

He would like to thank Florian Kranhold for pointing out a minor gap in an earlier proof of Theorem \ref{decomthm}, and is indebted to referees for many valuable comments which have improved the exposition of the paper.
 



\section{Rank $(n+1)$ bundles over $4$-manifolds} 
\label{sec: prelim}
In this section, we discuss necessary knowledge of rank $(n+1)$ vector bundles over simply connected $4$-manifolds with $n\geq 2$ from the perspective of homotopy theory. The argument extends that in \cite[Section 2]{Hua22} for the case $n=2$, with a revised exposition. To ensure clarity, we provide comprehensive details.

Let $N$ be a simply connected closed $4$-manifold such that $H^2(N;\mathbb{Z})\cong \mathbb{Z}^{\oplus d}$ with $d\geq 0$. 
There is the homotopy cofiber sequence
\begin{equation}\label{Ncofibreeq}
S^3\stackrel{\phi}{\longrightarrow} \bigvee_{i=1}^{d} S^2 \stackrel{\rho}{\longrightarrow} N \stackrel{q}{\longrightarrow} S^4\stackrel{\Sigma\phi}{\longrightarrow} \bigvee_{i=1}^{d} S^3,
\end{equation}
where $\phi$ is the attaching map of the top cell of $N$, $\rho$ is the inclusion of the $2$-skeleton, and $q$ is the pinch map onto the top cell. 
Since $\pi_3(BSO(n+1))=0$, it is easy to see that the homotopy cofibre sequence (\ref{Ncofibreeq}) implies the short exact sequence of pointed sets
\begin{equation}\label{seseq}
0\stackrel{}{\longrightarrow}[S^4, BSO(n+1)] \stackrel{q^\ast}{\longrightarrow} [N, BSO(n+1)] \stackrel{\rho^\ast}{\longrightarrow} [\bigvee_{i=1}^{d} S^2, BSO(n+1)] \stackrel{}{\longrightarrow} 0,
\end{equation}
in a strong sense that, there is an action of $[S^4, BSO(n+1)]$ on $[N, BSO(n+1)]$ through $q^\ast$ such that the sets $(\rho^\ast)^{-1} (x)$, for $x\in [\mathop{\bigvee}\limits_{i=1}^{d} S^2, BSO(n+1)]$ are precisely the orbits. Further, the short exact sequence is natural with respect to degree~$1$ maps between $4$-manifolds. That is, given a degree~$1$ map $g: N \to M$ between simply connected closed $4$-manifolds, it induces a morphism from the short exact sequence~\eqref{seseq} associated with $M$ to that associated with $N$.

A rank $(n+1)$ vector bundle $\xi$ over $N$ is classified by a map $f: N\longrightarrow BSO(n+1)$. The following lemma characterizes the restriction $\rho^\ast(f)$ by a circle bundle.
Let $s: S^1\cong SO(2)\rightarrow SO(n+1)$ be the canonical inclusion of Lie groups.
A cohomology class \( z \in H^2(N; \mathbb{Z}) \) is called 
\emph{primitive} if it cannot be written in the form 
\( z = k w \), where \( w \in H^2(N; \mathbb{Z}) \) and 
\( k \in \mathbb{Z} \) is a non-unit integer.

\begin{lemma}\label{alphalemma}
There exists a class $\alpha\in H^2(N;\mathbb{Z})\cong [N, BS^1]$, such that $\rho^\ast(f)=(\rho^\ast \circ (Bs)_\ast)(\alpha)$ through the composition
\[
[N, BS^1]\stackrel{(Bs)_\ast}{\longrightarrow} [N, BSO(n+1)] \stackrel{\rho^\ast}{\longrightarrow} [\bigvee_{i=1}^{d} S^2, BSO(n+1)],
 \]
and $\alpha$ satisfies that
\begin{itemize}
\item $\alpha=0$ if the second Stiefel-Whitney class $\omega_2(\xi)= 0$;
\item $\alpha$ is primitive and $\omega_2(\xi)\equiv \alpha~{\rm mod}~2$ if $\omega_2(\xi)\neq 0$.
\end{itemize}
\end{lemma}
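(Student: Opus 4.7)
The plan is to use the cofibre sequence (\ref{Ncofibreeq}) to reduce the factorisation claim to an explicit mod $2$ lifting problem on $H^2(N;\mathbb{Z})$. The key input is the computation $\pi_2(BSO(n+1))=\pi_1(SO(n+1))=\mathbb{Z}/2$ for $n\geq 2$, together with the fact that the standard inclusion $s:S^1\cong SO(2)\hookrightarrow SO(n+1)$ represents the generator of $\pi_1(SO(n+1))$. Consequently the induced map $(Bs)_\ast:\pi_2(BS^1)=\mathbb{Z}\to\pi_2(BSO(n+1))=\mathbb{Z}/2$ is the mod $2$ reduction. Since $\bigvee_{i=1}^{d}S^2$ is a wedge of $2$-spheres and $BSO(n+1)$ is simply connected, this identifies $[\bigvee_{i=1}^{d}S^2,BSO(n+1)]\cong (\mathbb{Z}/2)^{d}$, and the class $\rho^\ast(f)$ is simply the restriction of $\omega_2(\xi)$ to the $2$-skeleton (because a rank $(n+1)$ bundle on $S^2$ is determined by its $\omega_2$).

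Applying $H^2(-;\mathbb{Z})$ to the cofibre sequence (\ref{Ncofibreeq}) and using $\pi_2(S^3)=0=H^2(S^4;\mathbb{Z})$ on both sides shows that $\rho^\ast:H^2(N;\mathbb{Z})\stackrel{\cong}{\longrightarrow}H^2(\bigvee_{i=1}^{d}S^2;\mathbb{Z})\cong\mathbb{Z}^{d}$ is an isomorphism, and via $BS^1\simeq K(\mathbb{Z},2)$ the same statement holds for $[-,BS^1]$. In particular the composition $\rho^\ast\circ(Bs)_\ast:[N,BS^1]\to[\bigvee S^2,BSO(n+1)]$ is coordinate-wise mod $2$ reduction $\mathbb{Z}^{d}\to(\mathbb{Z}/2)^{d}$. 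Surjectivity of this reduction immediately produces an integral class $\alpha\in H^2(N;\mathbb{Z})$ satisfying $\rho^\ast(f)=(\rho^\ast\circ(Bs)_\ast)(\alpha)$ and $\alpha\equiv\omega_2(\xi)\pmod 2$.

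To finish I would upgrade the lift to a primitive one when $\omega_2(\xi)\neq 0$. The ambiguity in the choice of $\alpha$ is exactly $2H^2(N;\mathbb{Z})$, so every coordinate of $\alpha$ with respect to any $\mathbb{Z}$-basis of $H^2(N;\mathbb{Z})$ can be adjusted by an arbitrary even integer while preserving the mod $2$ reduction. Since $\omega_2(\xi)\neq 0$, at least one coordinate must be odd; replacing one such coordinate by $\pm 1$ forces the greatest common divisor of all coordinates to be $1$, which is precisely the primitivity condition. When $\omega_2(\xi)=0$ the obvious choice $\alpha=0$ works.

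The only potentially delicate point is the identification of $\rho^\ast(f)$ with $\omega_2(\xi)$ on the $2$-skeleton, which rests on the nontrivial computation that $\omega_2$ is the only characteristic class detecting bundles on $S^2$ in the stable range $n\geq 2$; everything else is bookkeeping with the cofibre sequence and an elementary integer-lifting argument, so no substantial obstacle is anticipated.
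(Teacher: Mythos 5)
Your argument is correct and is essentially the intended one: the paper states this lemma without proof, citing the $n=2$ case in [Hua22], and your reduction via $\pi_2(BSO(n+1))\cong\mathbb{Z}/2$, the $w_2$-classification of rank $\geq 3$ bundles over $S^2$, the isomorphism $\rho^\ast$ on $H^2$, and the even-integer adjustment to a primitive lift is exactly that argument carried out in general rank. The only step you leave implicit is that $\rho^\ast$ is also injective on mod $2$ cohomology (immediate from $H^2(S^4;\mathbb{Z}/2)=0$), which is what promotes the congruence on the $2$-skeleton to $\alpha\equiv\omega_2(\xi)$ mod $2$ in $H^2(N;\mathbb{Z}/2)$.
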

\begin{proof}
Consider the diagram
\[
\label{w2diag}
\xymatrix{
[N, BS^1] \ar[r]^<<<<<<<<{\rho^\ast}_<<<<<<<<{\cong} \ar[d]^{(Bs)_\ast} & [\mathop{\bigvee}\limits_{i=1}^{d} S^2, BS^1] \ar[d]^{(Bs)_\ast}  \ar[r]^<<<<<<{e}_<<<<<<{\cong}   & H^2(\mathop{\bigvee}\limits_{i=1}^{d} S^2;\mathbb{Z})\cong \mathop{\bigoplus}\limits_{i=1}^{d} \mathbb{Z} \ar[d]^{\rho_2} \\
[N, BSO(n+1)] \ar[r]^<<<<{\rho^\ast}  & [\mathop{\bigvee}\limits_{i=1}^{d} S^2, BSO(n+1)]   \ar[r]^<<<{\omega_2}_<<<{\cong}&H^2(\mathop{\bigvee}\limits_{i=1}^{d} S^2;\mathbb{Z}/2) \cong \mathop{\bigoplus}\limits_{i=1}^{d} \mathbb{Z}/2,
}
\]
where the pullback $\rho^\ast$ on the top row, the Euler class morphism $e$ and the second Stiefel-Whitney class morphism $\omega_2$ are isomorphisms, and $\rho_2$ is the mod $2$ reduction. The left square commutes automatically, while the right square commutes as the mod $2$ reduction of the Euler class of a complex line bundle is its second Stiefel-Whitney class. Note that $(\omega_2\circ \rho^\ast) (f)=\rho^\ast (\omega_2(\xi))$, and the latter $\rho^\ast: H^2(N;\mathbb{Z}/2)\larrow H^2(\mathop{\bigvee}\limits_{i=1}^{d} S^2;\mathbb{Z}/2)$ is an isomorphism on the second cohomology. 

Suppose that $\omega_2(\xi)= 0$. Then $(\omega_2\circ \rho^\ast)(f)=\rho^\ast (\omega_2(\xi))=0$. It follows that $\rho^\ast(f)=0$ as $\omega_2$ is an isomorphism. Choose $\alpha=0$. Then $\rho^\ast(f)=0=(\rho^\ast \circ (Bs)_\ast)(\alpha)$.

Otherwise, suppose that $\omega_2(\xi)\neq 0$. Then there is a primitive element $x\in H^2(\mathop{\bigvee}\limits_{i=1}^{d} S^2;\mathbb{Z})$ such that $\rho_2(x)=(\omega_2\circ \rho^\ast)(f)=\rho^\ast (\omega_2(\xi))\neq 0$. 
Since $e$ and the top $\rho^\ast$ are isomorphisms in the above diagram, we may choose $\alpha=((\rho^\ast)^{-1}\circ e^{-1})(x)$. Then $\alpha$ is primitive, and 
\[
\begin{split}
(\rho^\ast \circ (Bs)_\ast)(\alpha)&= ((Bs)_\ast\circ \rho^\ast)\circ ((\rho^\ast)^{-1}\circ e^{-1})(x)\\
&=((Bs)_\ast\circ  e^{-1}) (x)=(\omega_2^{-1}\circ \omega_2 \circ (Bs)_\ast\circ  e^{-1}) (x) \\
&=(\omega_2^{-1}\circ \rho_2\circ e\circ  e^{-1}) (x)=(\omega_2^{-1}\circ \rho_2) (x)\\
&=\rho^\ast(f).
\end{split}
\]
Further, since the Euler class morphism $e$ and the mod $2$ reduction $\rho_2$ are natural, there is the commutative diagram
\[
\xymatrix{
[N, BS^1]   \ar[r]^{e}_{\cong}  \ar[d]_{\rho^\ast}^{\cong}  & H^2(N;\mathbb{Z}) \ar[r]^{\rho_2}   \ar[d]_{\rho^\ast}^{\cong} &  H^2(N;\mathbb{Z}/2)\ar[d]_{\rho^\ast}^{\cong}\\
 [\mathop{\bigvee}\limits_{i=1}^{d} S^2, BS^1]  \ar[r]^<<<{e}_<<<{\cong}   & H^2(\mathop{\bigvee}\limits_{i=1}^{d} S^2;\mathbb{Z}) \ar[r]^{\rho_2}    &  H^2(\mathop{\bigvee}\limits_{i=1}^{d} S^2;\mathbb{Z}/2),
}
\]
where all the morphisms except $\rho_2$ are isomorphisms. Then 
\[
(\rho_2\circ  e)(\alpha)=((\rho^\ast)^{-1}\circ \rho^\ast\circ \rho_2\circ  e)(\alpha)=((\rho^\ast)^{-1}\circ \rho_2\circ  e\circ \rho^\ast)(\alpha)=((\rho^\ast)^{-1}\circ \rho_2)(x)= \omega_2(\xi).
\]
\end{proof}

The following lemma is a general form of \cite[Lemma 2.1]{Hua22}. It decomposes the classifying map $f$ into two simpler parts through the action $\cdot$ defined by $q^\ast$ in (\ref{seseq}). 
\begin{lemma}\label{replemma}
Define a morphism $\Phi$ as the composite 
\[
\Phi: [S^4, BSO(n+1)]\times  [N, BS^1]\stackrel{({\rm id}, (Bs)_\ast)}{\lllarrow} [S^4, BSO(n+1)]\times [N,BSO(n+1)]\stackrel{\cdot}{\longrightarrow} [N, BSO(n+1)].
\]
Then $\Phi$ is surjective. Moreover, for any choice of $\alpha$ in Lemma \ref{alphalemma}, there exists a unique class $f^\prime\in  [S^4, BSO(n+1)]$ such that 
\[
\Phi(f^\prime, \alpha)=q^\ast(f^\prime)\cdot  (Bs_\ast(\alpha)) =f.
\]
\end{lemma}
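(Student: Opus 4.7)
The plan is to reduce both claims to the strong form of the exact sequence (\ref{seseq}), namely the statement that the action of $[S^4,BSO(n+1)]$ on $[N,BSO(n+1)]$ through $q^{\ast}$ has orbits equal to the fibers of $\rho^{\ast}$. Once this is in hand, everything falls out of Lemma \ref{alphalemma}.

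For the second (more informative) statement, I would start from the given $f\in[N,BSO(n+1)]$ and the class $\alpha\in H^2(N;\mathbb{Z})$ produced by Lemma \ref{alphalemma}. The defining property of that $\alpha$ is precisely
\[
\rho^{\ast}(f)=\rho^{\ast}\bigl((Bs)_{\ast}(\alpha)\bigr),
\]
so $f$ and $(Bs)_{\ast}(\alpha)$ lie in the same fiber of $\rho^{\ast}$, hence by the orbit description they lie in the same orbit of $[S^4,BSO(n+1)]$. Therefore there exists $f'\in[S^4,BSO(n+1)]$ with
\[
q^{\ast}(f')\cdot(Bs)_{\ast}(\alpha)=f,
\]
which is exactly $\Phi(f',\alpha)=f$. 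This simultaneously proves the second clause and establishes surjectivity, since any $g\in[N,BSO(n+1)]$ admits \emph{some} $\alpha_g$ via Lemma \ref{alphalemma}, and the same argument then produces a $g'$ with $\Phi(g',\alpha_g)=g$.

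No real obstacle is expected, as this is essentially a formal consequence of (\ref{seseq}) together with the already-established Lemma \ref{alphalemma}. The only conceptual point to state carefully is the distinction between the two flavors of the conclusion: for surjectivity one is free to choose the $\alpha$ that $\Phi$ hits, whereas for the second assertion one must show that \emph{any} admissible $\alpha$ from Lemma \ref{alphalemma} can be completed to a preimage of $f$. Both use the identical orbit argument, so no further work is needed beyond invoking the naturality and the strong form of (\ref{seseq}).
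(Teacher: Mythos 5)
Your argument is correct and is exactly the intended one: the paper states this lemma without proof (citing the $n=2$ case in \cite{Hua22}), and the proof there is the same formal orbit argument you give, combining the strong form of the exact sequence (\ref{seseq}) (fibers of $\rho^\ast$ are the $q^\ast$-orbits) with the defining property $\rho^\ast(f)=\rho^\ast((Bs)_\ast(\alpha))$ from Lemma \ref{alphalemma}. Nothing is missing.
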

\begin{proof}
Let $\alpha\in [N, BS^1]$ be a class in Lemma \ref{alphalemma} for the classifying map $f$. In particular, $\rho^\ast(f)=\rho^\ast((Bs)_\ast(\alpha))$. By the exactness of the sequence \eqref{seseq}, it follows that $f$ and $(Bs)_\ast(\alpha)$ are in the same orbit, and there is a uniqe class $f^\prime\in  [S^4, BSO(n+1)]$ such that $
q^\ast(f^\prime)\cdot  (Bs_\ast(\alpha)) =f
$. Since $f$ is arbitrary, this implies that the composite $\Phi$ is surjective.
\end{proof}

\begin{remark}
From the perspective of manifold topology, it is known that a rank $(n+1)$ vector bundle $\xi$ over $N$ is classified by its second Stiefel-Whitney class $\omega_2(\xi)$ and its first Pontryagin class $p_1(\xi)$, plus the extra Euler class $e(\xi)$ when $n=3$. 
This is consistent with the homotopy arguments in this section. Indeed, Lemma \ref{replemma} implies that the bundle $\xi$ is determined by a pair $(f^\prime, \alpha)$, where $\alpha$ is an integral lifting of $\omega_2(\xi)$ in Lemma \ref{alphalemma}, and $f'$ together with its action corresponds to the first Pontryagin class $p_1(\xi)$, plus the Euler class $e(\xi)$ when $n=3$. For example, when $\xi$ is trivial on the $2$-skeleton of $N$, the lifting $\alpha=0$ and the characteristic classes $p_1(\xi)$ and $e(\xi)$ are the pullback of those of the bundle over $S^4$ determined by $f'\in [S^4, BSO(n+1)]$.
\end{remark}


\section{The case when $d\geq 2$} 
\label{sec: d>1}
Let $N$ be a simply connected $4$-manifold such that $H^2(N;\mathbb{Z})\cong \mathbb{Z}^{\oplus d}$ with $d\geq 2$. A rank $(n+1)$ vector bundle $\xi$ over $N$ is classified by a map $f: N\longrightarrow BSO(n+1)$. 
Fix a class $\alpha\in H^2(N;\mathbb{Z})$ in Lemma \ref{alphalemma}.

\begin{lemma}\label{Qlemma}
There exists a simply connected $4$-dimensional Poincar\'{e} duality complex $Q=(S^2\vee S^2)\cup e^4$ with a degree $1$ map
\[
h: N\stackrel{}{\longrightarrow} Q,
\]
such that $h^\ast(x)=\alpha$ for some class $x\in H^2(Q;\mathbb{Z})$.
\end{lemma}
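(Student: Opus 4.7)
The plan is to construct $Q$ by collapsing $d-2$ of the $2$-spheres in the $2$-skeleton of $N$. Using the cofiber sequence (\ref{Ncofibreeq}), I present $N$ as $(\bigvee_{i=1}^{d} S^2)\cup_\phi e^4$ with attaching map $\phi\in\pi_3(\bigvee_{i=1}^{d} S^2)$, and plan to first change basis so that $\alpha$ (equivalently, its Poincar\'{e} dual $\alpha^\vee\in H_2(N;\mathbb{Z})$) sits in a distinguished rank-$2$ direct summand adapted to the intersection form.

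The key algebraic step is the following: using the unimodularity of the intersection form $B$ of $N$ together with the primitivity of $\alpha^\vee$ (or taking any primitive vector when $\alpha=0$), find $\beta\in H_2(N;\mathbb{Z})$ such that $\{\alpha^\vee,\beta\}$ extends to an integer basis and the restricted $2\times 2$ Gram matrix $\left(\begin{smallmatrix} B(\alpha^\vee,\alpha^\vee) & 1 \\ 1 & B(\beta,\beta)\end{smallmatrix}\right)$ has determinant $\pm 1$. One first chooses $\gamma$ with $B(\alpha^\vee,\gamma)=1$, which exists by unimodularity and automatically makes $\{\alpha^\vee,\gamma\}$ part of an integer basis; one then corrects $\gamma$ by an element of $(\alpha^\vee)^\perp$ to bring its self-intersection to the required value. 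The complementary basis $\{e_3,\ldots,e_d\}$ then carries a unimodular restricted form automatically, so $B$ block-diagonalises as $A\oplus B'$ with $A$ the chosen $2\times 2$ unimodular block.

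I would realise this basis change by a self-homotopy equivalence of $\bigvee_{i=1}^{d} S^2$ corresponding to the element of $GL_d(\mathbb{Z})$ which sends the standard basis to the new one. By the Hilton-Milnor decomposition of $\pi_3(\bigvee_{i=1}^{d} S^2)$ and the block-diagonal form of $B$, the attaching map splits as $\phi=\phi_A+\phi_B$ with $\phi_A\in\pi_3(S^2\vee S^2)$ supported on the first two summands and $\phi_B\in\pi_3(\bigvee_{i=3}^{d} S^2)$ on the rest: the cross-Whitehead products $[\iota_i,\iota_j]$ with $i\leq 2<j$ have zero coefficient because these coefficients are precisely the vanishing off-diagonal entries of the intersection form between the two blocks, while the Hopf self-terms absorb into $\phi_A$ or $\phi_B$ according to the index. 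Define
\[
Q:=(S^2\vee S^2)\cup_{\phi_A} e^4,
\]
a simply connected $4$-dimensional Poincar\'{e} complex whose intersection form coincides with the unimodular block $A$. The wedge projection $\bigvee_{i=1}^{d} S^2\to S^2\vee S^2$ onto the first two summands extends over the $4$-cell of $N$, because it sends $\phi=\phi_A+\phi_B$ to $\phi_A$, matching the attaching map of $Q$; this defines $h:N\to Q$. By construction $h$ has degree $1$, and the class $x\in H^2(Q;\mathbb{Z})$ dual to the first wedge summand of $Q$ satisfies $h^\ast(x)=\alpha$.

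The main obstacle will be the algebraic splitting step: producing the rank-$2$ unimodular direct summand of $H_2(N;\mathbb{Z})$ through $\alpha^\vee$. This rests on the interplay between the unimodularity of the intersection form of the closed $4$-manifold $N$ and the primitivity of $\alpha^\vee$, and is the main piece of linear algebra over $\mathbb{Z}$ I would need to control carefully; the rest of the argument is a routine cellular extension.
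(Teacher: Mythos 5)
Your overall strategy (choose $\beta$, adapt the basis of the $2$-skeleton, kill the remaining $d-2$ spheres to produce $Q=(S^2\vee S^2)\cup e^4$ together with $h$) is the same as the paper's, but the algebraic step you yourself flag as the main obstacle is not just delicate: it is false in general. You need $\beta$ with $B(\alpha^\vee,\beta)=1$ and $\det\left(\begin{smallmatrix} a & 1\\ 1 & b\end{smallmatrix}\right)=ab-1=\pm1$, where $a=B(\alpha^\vee,\alpha^\vee)$, $b=B(\beta,\beta)$; equivalently, a rank-two unimodular orthogonal summand of $(H_2(N;\mathbb{Z}),B)$ containing $\alpha^\vee$. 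Take $N=\mathbb{C}P^2\,\#\,\mathbb{C}P^2\,\#\,\mathbb{C}P^2$, so $B$ is the standard positive definite form on $\mathbb{Z}^3$, and $\alpha^\vee=(1,1,1)$, a primitive class which is an admissible choice in Lemma~\ref{alphalemma} for any $\xi$ with $\omega_2(\xi)\equiv(1,1,1)$ mod $2$ (for instance $\xi=L_\alpha\oplus\mathbb{R}^{n-1}$). Then $a=3$, and $ab-1=\pm1$ forces $b=0$, which no $\beta$ with $B(\alpha^\vee,\beta)=1$ can satisfy in a positive definite lattice; there is no ``required value'' to correct $\gamma$ to, so the correction by elements of $(\alpha^\vee)^\perp$ cannot succeed. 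The failure is robust: any rank-two unimodular sublattice of $\langle 1\rangle^{\oplus 3}$ is isometric to $\langle 1\rangle\oplus\langle 1\rangle$ and hence spanned by two of the vectors $\pm e_i$, so it contains no vector with all coordinates odd; in particular no primitive lift of this $\omega_2$ lies in such a summand, and re-choosing $\alpha$ does not help. Since both your Hilton--Milnor splitting $\phi=\phi_A+\phi_B$ (which needs the cross Whitehead coefficients to vanish literally) and your definition of $Q$ rest on the block-diagonalisation $B=A\oplus B'$, the argument cannot be completed as written. (A smaller inaccuracy: $B(\alpha^\vee,\gamma)=1$ does not by itself make $\{\alpha^\vee,\gamma\}$ part of an integral basis; consider $\{e_1,\,e_1+2e_2\}$ in $\langle 1\rangle^{\oplus 2}$.)

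For comparison, the paper's proof demands much less at this stage: only some $\beta$ with $\langle\alpha\cup\beta,[N]\rangle=\pm1$ (available from unimodularity and primitivity of $\alpha$, with no condition on self-intersections and no orthogonality), after which $Q$ is defined as the homotopy cofibre of $K\hookrightarrow N$ and $h$ is the quotient map; under the collapse the cross Whitehead products and the Hopf terms on the collapsed spheres die automatically, so no splitting of the attaching map is needed. The genuinely delicate point --- why, for a suitable choice of complementary basis, the resulting $Q$ is a Poincar\'e complex with $\alpha$ in the image of $h^\ast$ --- is exactly the point your reduction isolates, and the paper defers it to the basis choice in \cite[Proof of Proposition 3.2]{BT14} when it asserts $H^\ast(Q;\mathbb{Z})\cong\mathbb{Z}[\alpha,\beta]/(\alpha^2,\beta^2)$. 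Be aware that your lattice computation puts real pressure on that point as well: for any degree-one map $h:N\to Q$ to a Poincar\'e complex with $h^\ast(x)=\alpha$, the image of $h^\ast$ is a rank-two unimodular sublattice of $H^2(N;\mathbb{Z})$ containing $\alpha$ (and a unimodular sublattice of a unimodular lattice is automatically an orthogonal summand), which the example above excludes. So the obstruction you met is not an artefact of your particular route; but as a proof of the lemma your argument stalls at a step that provably cannot be carried out, and repairing it requires either extra hypotheses on $\alpha$ and the intersection form or a weakening of what is demanded of $Q$ (for the later arguments one only really needs $\Omega Q\simeq\Omega(S^2\times S^2)$ and the factorisation of $f$ through $h$).
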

\begin{proof}
We consider the case when $\omega_2(\xi)\neq 0$ first. By Lemma \ref{alphalemma} the class $\alpha\in H^2(N;\mathbb{Z})$ is primitive.
We claim that there exists a class $\beta\in H^2(N;\mathbb{Z})$ such that $\beta \neq \alpha$, and $\langle \alpha\cup \beta, [N]\rangle =\pm 1$. 
Otherwise, by Poincar\'{e} duality $\langle \alpha\cup \alpha, [N]\rangle =\pm 1$. Since $d\geq 2$, we can choose another generator $\gamma\in H^2(N;\mathbb{Z})$. By hypothesis, $\langle \alpha\cup \gamma, [N]\rangle =\pm k$ with $k\neq 1$. If $k=0$, let $\beta= \alpha+\gamma$, and $\langle \alpha\cup (\alpha+\gamma), [N]\rangle =\pm 1$. If $k\geq 2$, let $\beta= (1-k)\alpha+\gamma$, and $\langle \alpha\cup ((1-k)\alpha+\gamma), [N]\rangle =\pm 1$. Hence, the claim is proved.

Then as in \cite[Proof of proposition 3.2]{BT14}, by abuse of notation the inclusion $\rho$ of the $2$-skeleton of $N$ can be chosen as
\[
\rho: K\vee (S^2\vee S^2) \stackrel{j\vee (\alpha\vee \beta)}{\longrightarrow} N,
\]
where $K=\mathop{\bigvee}\limits_{i=1}^{d-2}S^2$, and the last two $S^2$ represent the classes $\alpha$ and $\beta$. Let $Q$ be the homotopy cofibre of the composition $K\stackrel{}{\hookrightarrow}K\vee (S^2\vee S^2)\stackrel{\rho}{\longrightarrow} N$. Denote by $h: N\stackrel{}{\longrightarrow}Q$ the quotient map.
It is clear that $Q\simeq (S^2\vee S^2)\cup e^4$ as $CW$-complexes. By construction $h^\ast: H^\ast(Q;\mathbb{Z})\longrightarrow H^\ast(N;\mathbb{Z})$ is an isomorphism onto the submodule $\mathbb{Z}\alpha\oplus \mathbb{Z}\beta\subseteq H^2$ and an isomorphism on $H^4$, and 
\[
H^\ast(Q;\mathbb{Z})\cong H^\ast(S^2\times S^2;\mathbb{Z})\cong \mathbb{Z}[\alpha, \beta]/(\alpha^2, \beta^2).
\]
In particular, $Q$ is a Poincar\'{e} duality complex. By choosing $x=(h^{\ast})^{-1}(\alpha)$, the lemma is proved when $\omega_2(\xi)\neq 0$.

When $\omega_2(\xi)= 0$, the class $\alpha=0$ by Lemma \ref{alphalemma}. Choose any primitive class $\alpha^\prime \in H^2(N;\mathbb{Z})$. Then we can run through the previous argument for $\alpha^\prime$ instead of $\alpha$, except in the last step where we can simply let $x=0$. This proves the lemma when $\omega_2(\xi)= 0$.
\end{proof}

\begin{lemma}\label{loopQlemma}
For the Poincar\'{e} duality complex $Q$ in Lemma \ref{Qlemma}, there is a homotopy equivalence
\[
\Omega Q\simeq \Omega (S^2\times S^2),
\]
\end{lemma}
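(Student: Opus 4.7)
The plan is to establish the sharper statement $Q\simeq S^2\times S^2$, from which the loop equivalence is immediate. Since $Q=(S^2\vee S^2)\cup_{\phi_Q} e^4$ for some attaching map $\phi_Q\in\pi_3(S^2\vee S^2)$, I would first invoke Hilton's theorem to decompose
\[
\pi_3(S^2\vee S^2)\cong \pi_3(S^2)\oplus\pi_3(S^2)\oplus\pi_3(S^3),
\]
where the first two summands are generated by the Hopf maps $\eta_1,\eta_2$ into each wedge factor and the third summand is generated by the Whitehead product $w=[\iota_1,\iota_2]$ of the two inclusions. Higher-weight basic Whitehead products map from spheres of dimension at least $4$ and so contribute nothing in this range. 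This lets me write $\phi_Q=a\eta_1+b\eta_2+cw$ for some integers $a,b,c$.

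The next step is to read off $a,b,c$ from the cohomology ring of $Q$. A standard calculation for $4$-dimensional two-cell attachments gives $\alpha\cup\alpha=a\gamma$, $\beta\cup\beta=b\gamma$ and $\alpha\cup\beta=\pm c\gamma$, where $\gamma\in H^4(Q;\mathbb{Z})$ is dual to the top cell. Since by Lemma \ref{Qlemma} we have $H^*(Q;\mathbb{Z})\cong \mathbb{Z}[\alpha,\beta]/(\alpha^2,\beta^2)$ with $\alpha\cup\beta$ a generator of $H^4$, this forces $a=b=0$ and $c=\pm 1$. Hence $\phi_Q=\pm[\iota_1,\iota_2]$, which is precisely the attaching map of the top cell of $S^2\times S^2$ in its standard CW structure. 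Thus $Q\simeq S^2\times S^2$ as spaces, and the stated loop equivalence follows at once.

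The main obstacle is the careful bookkeeping in the identification of the Hilton coefficients with cup product data, since it involves conventions for signs of Whitehead products and Hopf invariants, together with the identification of $H^4$ of the cofibre with $\mathbb{Z}\langle\gamma\rangle$. A cleaner alternative that avoids this explicit attaching-map analysis is to appeal directly to the classification of simply connected $4$-dimensional Poincar\'e complexes by their intersection form (due to Milnor and J.H.C.~Whitehead): the intersection form of $Q$ is hyperbolic, matching that of $S^2\times S^2$, so $Q\simeq S^2\times S^2$ and looping gives the lemma.
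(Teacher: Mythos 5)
The step where you pass to the unlooped claim $Q\simeq S^2\times S^2$ is a genuine gap, and in fact that stronger claim is false in general. The construction in Lemma \ref{Qlemma} only controls the mixed cup product: $\beta$ is chosen so that $\langle \alpha\cup\beta,[N]\rangle=\pm 1$, while $\langle\alpha\cup\alpha,[N]\rangle$ and $\langle\beta\cup\beta,[N]\rangle$ are dictated by the intersection form of $N$ and need not vanish; moreover $\alpha$ cannot be rechosen at will, since by Lemma \ref{alphalemma} it is tied to $w_2(\xi)$. (The ring isomorphism $H^\ast(Q;\mathbb{Z})\cong\mathbb{Z}[\alpha,\beta]/(\alpha^2,\beta^2)$ asserted inside the proof of Lemma \ref{Qlemma}, on which your argument leans, holds only additively in general.) Consequently, in your Hilton decomposition the coefficients are $a=\langle\alpha^2,[N]\rangle$ and $b=\langle\beta^2,[N]\rangle$, and nothing forces $a=b=0$; likewise the intersection form of $Q$ is unimodular of rank $2$ but need not be hyperbolic, so the appeal to the Milnor--Whitehead classification does not go through either. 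Concretely, take $d=2$, so that $K=\ast$ and $Q\simeq N$, with $N=\mathbb{C}P^2\#\mathbb{C}P^2$ and $\xi$ a bundle whose $w_2$ is the mod $2$ reduction of a generator $e_1$ of one summand of $H^2(N;\mathbb{Z})$: every admissible $\alpha$ is congruent to $e_1$ mod $2$ and hence has odd square, every admissible $\beta$ has nonzero square (the form is $\mathrm{diag}(1,1)$, positive definite), and $Q=\mathbb{C}P^2\#\mathbb{C}P^2$ is not homotopy equivalent to $S^2\times S^2$ because its intersection form is odd while that of $S^2\times S^2$ is even. Yet the lemma is true for this $Q$, which shows the statement cannot be obtained by identifying $Q$ with $S^2\times S^2$.

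What is needed, and what the paper actually does, is an argument purely at the loop space level that is insensitive to the diagonal entries $a,b$, equivalently to the $\eta$-components of the attaching map: the paper's proof is a one-line citation of \cite[Lemma 2.3]{BT14}, and more generally the results of \cite{BT14} show that the loop space homotopy type of a simply connected $4$-dimensional Poincar\'e duality complex depends only on the rank of $H^2$, not on the intersection form. Your bookkeeping of Hilton coefficients versus cup products is correct as far as it goes, but to complete a self-contained proof you would have to reproduce a loop-level argument of the Beben--Theriault type (for instance their principal fibration method), rather than argue that the $\eta$-components vanish, which the construction of $Q$ does not and cannot guarantee.
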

\begin{proof}
The lemma is a special case of \cite[Lemma 2.3]{BT14}. Consider the composite
\[
\Omega S^2\times \Omega S^2 \stackrel{\phi}{\larrow} \Omega(S^2\vee S^2)\stackrel{\Omega i}{\larrow} \Omega Q,
\]
where $\phi$ is a right homotopy inverse of the looped inclusion $\Omega (S^2\vee S^2)\longrightarrow \Omega (S^2\times S^2)$ by the Hilton-Milnor theorem, and $i$ is the inclusion of the $2$-skeleton of $Q$. By the proof of Lemma \ref{Qlemma}, we have the ring isomorphism $H^\ast(Q;\mathbb{Z})\cong H^\ast(S^2\times S^2;\mathbb{Z})$. Then the composite induces an isomorphism on homology and hence is a homotopy equivalence by the Whitehead theorem. 
\end{proof}

\begin{lemma}\label{fQlemma}
The classifying map $f$ of $\xi$ factors as 
\[
f: N\stackrel{h}{\longrightarrow} Q\stackrel{f_Q}{\longrightarrow} BSO(n+1),
\]
for some map $f_Q$.
\end{lemma}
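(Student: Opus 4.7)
The plan is to use the decomposition of $f$ provided by Lemma \ref{replemma} and show that each of its two constituents factors through $h$, assembling the pieces via naturality of the action in (\ref{seseq}). By Lemma \ref{replemma}, one can write $f = q^\ast(f^\prime) \cdot (Bs)_\ast(\alpha)$ for some $f^\prime \in [S^4, BSO(n+1)]$, where $\cdot$ denotes the action of $[S^4, BSO(n+1)]$ on $[N, BSO(n+1)]$ through $q^\ast$.

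I would then factor each piece through $h$ separately. For $(Bs)_\ast(\alpha)$: Lemma \ref{Qlemma} gives $\alpha = h^\ast(x)$ for some $x \in H^2(Q;\mathbb{Z}) \cong [Q, BS^1]$, and since $(Bs)_\ast$ is postcomposition with $Bs: BS^1 \to BSO(n+1)$, we have $(Bs)_\ast(\alpha) = h^\ast((Bs)_\ast(x))$. For $q^\ast(f^\prime)$: introduce the pinch map $q_Q: Q \to S^4$ onto the top cell of $Q$. Then $q \simeq q_Q \circ h$, because by the construction in Lemma \ref{Qlemma} the map $h$ is the quotient collapsing a wedge summand $K$ of the 2-skeleton of $N$ and the 4-cell of $Q$ coincides with that of $N$, so both $q$ and $q_Q \circ h$ crush the 2-skeleton of $N$ and restrict to a degree $1$ map on the top cell. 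Hence $q^\ast(f^\prime) = h^\ast(q_Q^\ast(f^\prime))$.

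To assemble the pieces, I would observe that $Q = (S^2 \vee S^2) \cup e^4$ is itself a simply connected $4$-dimensional Poincar\'{e} complex whose own cofiber sequence $S^3 \to S^2 \vee S^2 \to Q \stackrel{q_Q}{\longrightarrow} S^4$ induces an analogous short exact sequence and action as in (\ref{seseq}). The coaction underlying these actions is natural under maps fitting in compatible cofiber-sequence diagrams; applied to $h$, together with the identity $q \simeq q_Q \circ h$ already established, this yields
\[
h^\ast\big(q_Q^\ast(f^\prime) \cdot (Bs)_\ast(x)\big) = q^\ast(f^\prime) \cdot h^\ast((Bs)_\ast(x)) = q^\ast(f^\prime) \cdot (Bs)_\ast(\alpha) = f.
\]
Setting $f_Q := q_Q^\ast(f^\prime) \cdot (Bs)_\ast(x) \in [Q, BSO(n+1)]$ then gives the desired factorization $f = f_Q \circ h$.

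The main subtlety I anticipate is justifying the naturality of the coaction: one must verify that $h$ induces a commuting square between the coactions $N \to N \vee S^4$ and $Q \to Q \vee S^4$. This reduces to checking that the attaching map of the 4-cell of $Q$ is the composition of $\phi: S^3 \to \bigvee_{i=1}^d S^2$ with the projection onto the two-factor wedge $S^2 \vee S^2$ collapsing $K$, which is immediate from the construction of $Q$ as the cofiber of $K \hookrightarrow N$. Once this is in hand, the author's naturality statement for (\ref{seseq}) under degree-$1$ maps of $4$-manifolds extends verbatim to this Poincar\'{e}-complex setting.
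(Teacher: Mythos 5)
Your proposal is correct and follows essentially the same route as the paper: decompose $f$ via Lemma \ref{replemma}, define $f_Q$ as $q_Q^\ast(f^\prime)\cdot (Bs)_\ast(x)$ (the paper's $\mathfrak{q}^\ast(f^\prime)\cdot Bs_\ast(x)$), and pull back along $h$ using naturality of the action coming from the compatible cofibration diagrams, together with $h^\ast(x)=\alpha$ from Lemma \ref{Qlemma}. The coaction-compatibility point you flag is exactly what the paper handles by exhibiting the morphism of cofibre sequences induced by $h$ and citing the standard naturality result for such actions.
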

\begin{proof}
It is clear that the degree $1$ map $h$ induces a diagram of homotopy cofibrations
\[
\xymatrix{
K\vee (S^2\vee S^2)  \ar[r]^<<<{\rho} \ar[d]^{h_\prime}  &
N \ar[r]^{q} \ar[d]^{h} &
S^4 \ar@{=}[d] \\
S^2\vee S^2 \ar[r]^<<<<<{\varrho}&
Q\ar[r]^{\mathfrak{q}} &
S^4,
}
\]
where the maps $\varrho$ and $\mathfrak{q}$ are the obvious inclusion and quotient maps, and the projection $h_\prime$ is the restriction of $h$ on the $2$-skeletons. It implies the morphism of short exact sequences
\[
 \xymatrix{
0 \ar[r] &  [S^4, BSO(n+1)]   \ar[r]^{\mathfrak{q}^\ast} \ar[d]^{}_{\cong}  &[Q, BSO(n+1)] \ar[r]^<<<<<{\varrho^\ast} \ar[d]^{h^\ast} & [S^2\vee S^2, BSO(n+1)] \ar[r] \ar[d]^{h_\prime^\ast}  & 0\\
0 \ar[r] &  [S^4, BSO(n+1)]   \ar[r]^{q^\ast}  &[N, BSO(n+1)] \ar[r]^<<<<{\rho^\ast} & [K\vee (S^2\vee S^2), BSO(n+1)] \ar[r]  & 0,
}
\]
where the rows are short exact sequences from (\ref{seseq}). 
By Lemma \ref{replemma}, we know that $f=\Phi(f^\prime, \alpha)=q^\ast(f^\prime)\cdot  (Bs_\ast(\alpha))$ for some $f^\prime\in  [S^4, BSO(n+1)]$.
Let $f_Q=\mathfrak{q}^\ast(f^\prime)\cdot  (Bs_\ast(x))$ with $x\in [Q, BS^1]$ in Lemma \ref{Qlemma}. 
Then by Lemma \ref{Qlemma} and the naturality of the actions through the above diagram \cite{Swi02, Ark11}, we have
\[
\begin{split}
h^\ast(f_Q)
&=h^\ast(\mathfrak{q}^\ast(f^\prime)\cdot  (Bs_\ast(x)))\\
&=q^\ast(f^\prime) \cdot  (Bs_\ast(h^\ast (x)))\\
&=q^\ast(f^\prime) \cdot  (Bs_\ast(\alpha))\\
&=f.
\end{split}
\]
This proves the lemma.
\end{proof}

We are ready to prove Theorem \ref{decomthm} when $d\geq 2$.
\begin{proof}[Proof of Theorem \ref{decomthm} when $d\geq 2$]
In this case, by Lemmas \ref{Qlemma} and \ref{fQlemma}, there exists a Poincar\'{e} duality complex $Q=(S^2\vee S^2)\cup e^4$ such that the classifying map $f$ of $\xi$ factors as 
\[
f: N\stackrel{h}{\longrightarrow} Q\stackrel{f_Q}{\longrightarrow} BSO(n+1)
\] 
for a degree one map $h$ and a map $f_Q$. The latter map $f_Q$ determines a vector bundle $\xi_Q$ over $Q$, the pullback of which along $h$ is isomorphic to $\xi$. This implies a morphism of sphere bundles
\begin{equation}
\label{NHQbundeleq}
\begin{gathered}
 \xymatrix{
S^n\ar@{=}[d] \ar[r]^{i} &
M \ar[r]^{p} \ar[d]^{\widetilde{h}}&
N\ar[d]^{h}\\
S^n \ar[r]^{\widetilde{i}} &
\widetilde{Q} \ar[r]^{\widetilde{p}} &
Q,
}
\end{gathered}
\end{equation}
where $i$ and $\widetilde{i}$ are fibre inclusions, $p$ and $\widetilde{p}$ are bundle projections, 
$\widetilde{h}$ is the induced map, and $\widetilde{Q}$ is an $(n+4)$ dimensional Poincar\'{e} duality complex.

By Lemma \ref{loopQlemma}, $\Omega Q\simeq \Omega (S^2\times S^2)$. Since $n\geq 2$, the cell structures of $Q$ and $\widetilde{Q}$ imply that the projection $\widetilde{p}$ has a partial section $i_1\vee i_2: S^{2}\vee S^{2}\rightarrow \widetilde{Q}$ on the $2$-skeleton $S^{2}\vee S^{2}$ of $Q$. By the Hilton-Milnor theorem, the canonical inclusion $S^{2}\vee S^{2}\stackrel{j}{\longrightarrow} S^{2}\times S^{2}$ has a right homotopy inverse after looping, that is, there is a map $\phi: \Omega S^{2}\times \Omega S^{2}\stackrel{}{\longrightarrow}\Omega(S^{2}\vee S^{2})$ such that $\Omega j\circ\phi\simeq {\rm id}$. Then it is easy to see that the composite
\[
\Omega S^{2}\times \Omega S^{2}\stackrel{\phi}{\longrightarrow}\Omega(S^{2}\vee S^{2}) \stackrel{\Omega (i_1\vee i_2)}{\llarrow} \Omega \widetilde{Q}\stackrel{\Omega\widetilde{p}}{\longrightarrow}\Omega Q
\]
induces an isomorphism in homology, and so is a homotopy equivalence by the Whitehead Theorem. 
Therefore, the looped map $\Omega \tilde{p}$ admits a right homotopy inverse, and the sphere bundle of $\xi_Q$ splits after looping to give
\[
\Omega \widetilde{Q}\simeq \Omega S^n\times \Omega Q\simeq  \Omega S^n\times\Omega S^{2}\times \Omega S^{2}.
\]
In particular, in Diagram (\ref{NHQbundeleq}) $\Omega \widetilde{i}$ has a left homotopy inverse $\widetilde{r}$, which implies that $\widetilde{r}\circ \Omega\widetilde{h}$ is a left homotopy inverse of $\Omega i$. 
Then the sphere bundle in the top row of Diagram (\ref{NHQbundeleq}) splits after looping to give
\[
\Omega M\simeq \Omega S^n\times \Omega N.
\]
Further, by \cite[Theorem 1.3]{BT14} when $d\geq 2$ there is a homotopy equivalence
\[
 \Omega N\simeq S^1\times \Omega (S^2\times S^3)\times  \Omega\big(J\vee(J\wedge\Omega (S^2\times S^3))\big)
\]
with $J=\mathop{\bigvee}\limits_{i=1}^{d-2}(S^2\vee S^3)$. Then in this case the theorem follows by combining the above decompositions.
\end{proof}


\section{The case when $d=1$} 
\label{sec: d=1}
Let $N$ be a simply connected $4$-manifold such that $H^2(N;\mathbb{Z})\cong \mathbb{Z}$.
\begin{lemma}\label{hCP2lemma}
There is a homotopy equivalence $N\simeq \mathbb{C}P^2$.
\end{lemma}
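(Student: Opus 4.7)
The plan is to exploit the cofibre sequence (\ref{Ncofibreeq}) with $d=1$, which reads
\[
S^3 \stackrel{\phi}{\longrightarrow} S^2 \stackrel{\rho}{\longrightarrow} N \stackrel{q}{\longrightarrow} S^4,
\]
presenting $N$ as a two-cell complex $S^2 \cup_{\phi} e^4$. Since $\pi_3(S^2)\cong \mathbb{Z}$ is generated by the Hopf map $\eta$, the top-cell attaching map must have the form $\phi = k\eta$ for some integer $k$, so $N\simeq S^2\cup_{k\eta} e^4$. Thus the problem reduces to pinning down $k$.

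To determine $k$, I would use the cup product structure. A standard computation on the mapping cone $C_{k\eta} = S^2\cup_{k\eta}e^4$ shows that if $x\in H^2(C_{k\eta};\mathbb{Z})$ and $y\in H^4(C_{k\eta};\mathbb{Z})$ denote the generators dual to the cells, then $x\cup x = \pm k\, y$; this is essentially the statement that the Hopf invariant of $k\eta$ equals $k$. On the other hand, because $N$ is a closed, simply connected $4$-manifold, Poincar\'{e} duality forces the cup product pairing $H^2(N;\mathbb{Z})\otimes H^2(N;\mathbb{Z})\longrightarrow H^4(N;\mathbb{Z})\cong\mathbb{Z}$ to be unimodular, which in the rank-one situation means $x\cup x$ must generate $H^4(N;\mathbb{Z})$. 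Combining the two computations gives $k = \pm 1$.

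Finally, $\mathbb{C}P^2$ is by its usual CW-structure the mapping cone of $\eta$, while the cone on $-\eta$ is homotopy equivalent to it via the self-homotopy equivalence of $S^2$ (equivalently, of $S^3$) of degree $-1$ that carries $-\eta$ back to $\eta$. Either way the cofibre is $\mathbb{C}P^2$, so $N\simeq \mathbb{C}P^2$. The only step with any real content is the cup-product/Hopf-invariant identification $x\cup x = \pm k\, y$; once this is in hand, the identification of $\pi_3(S^2)$ with $\mathbb{Z}\langle\eta\rangle$, the appeal to Poincar\'{e} duality, and the recognition of $\mathbb{C}P^2$ as the cone of $\eta$ are routine.
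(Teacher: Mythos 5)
Your argument is correct, but it proceeds differently from the paper. You classify the two-cell complex directly: using the cofibration $S^3\to S^2\to N$ you write $N\simeq S^2\cup_{k\eta}e^4$, identify $k$ as the Hopf invariant via $x\cup x=\pm k\,y$, and invoke unimodularity of the intersection form to force $k=\pm1$, finally recognizing both $C_{\eta}$ and $C_{-\eta}$ as $\mathbb{C}P^2$. The paper instead represents the generator of $H^2(N;\mathbb{Z})$ by a map $N\to\mathbb{C}P^\infty$, compresses it into $\mathbb{C}P^2$ by cellular approximation, and checks that the resulting map $\lambda:N\to\mathbb{C}P^2$ is a cohomology isomorphism (again by Poincar\'{e} duality), hence a homotopy equivalence by the Whitehead theorem. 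Both routes hinge on the same input, unimodularity of the cup-product pairing, but the paper's argument never needs $\pi_3(S^2)\cong\mathbb{Z}\langle\eta\rangle$, the Hopf-invariant formula for mapping cones, or the sign discussion, and it produces the equivalence as a concrete map; yours is more computational and makes the attaching map explicit, which can be useful but costs you the extra bookkeeping. One small caution in your sign step: postcomposition with the degree $-1$ self-map of $S^2$ acts on $\pi_3(S^2)$ by $(-1)^2=+1$, so it does \emph{not} carry $-\eta$ to $\eta$; the correct move is precomposition with the degree $-1$ self-equivalence of $S^3$ (as in your parenthetical), which does give $C_{-\eta}\simeq C_{\eta}=\mathbb{C}P^2$, so the conclusion stands.
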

\begin{proof}
The generator of $H^2(N;\mathbb{Z})\cong \mathbb{Z}$ is represented by a map $N\rightarrow \mathbb{C}P^\infty$. By the cellular approximation theorem it factors through $\mathbb{C}P^2$ as
\[
N\stackrel{\lambda}{\longrightarrow}\mathbb{C}P^2\stackrel{}{\hookrightarrow}\mathbb{C}P^\infty
\]
for some map $\lambda$.
Then by Ponincar\'{e} duality, $\lambda$ induces an isomorphism on cohomology and then is a homotopy equivalence by the Whitehead theorem.
\end{proof}
By Lemma \ref{hCP2lemma}, $N$ is homotopy equivalent to $\mathbb{C}P^2$. Since we are only interested in homotopy types, we can simply let $N=\mathbb{C}P^2$.

A rank $(n+1)$ vector bundle $\xi$ over $\mathbb{C}P^2$ is classified by a map $f: \mathbb{C}P^2\longrightarrow BSO(n+1)$. Consider the diagram of fibre bundles
\begin{equation}\label{keydiag}
\begin{gathered}
 \xymatrix{
&
S^n \ar@{=}[r] \ar[d]^{\iota} &
S^n \ar[d]^{i} \\
S^1\ar@{=}[d] \ar[r]^{\jmath} &
X \ar[d]^{\mathfrak{p}} \ar[r]^{\psi} &
M \ar[d]^{p}\\
S^1 \ar[r]^>>>>{j} &
S^5 \ar[r]^<<<<{\pi} &
\mathbb{C}P^2,
}
\end{gathered}
\end{equation}
where the bottom bundle is the canonical circle bundle of $\mathbb{C}P^2$, the rightmost bundle is the sphere bundle of $\xi$, and $X$ is a closed $(n+5)$-manifold.
\begin{lemma}\label{pixilemma}
The pullback vector bundle $\pi^\ast(\xi)$ is trivial, and in particular its sphere bundle
\[
X\cong S^n\times S^5.
\]
\end{lemma}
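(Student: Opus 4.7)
The plan is to reduce triviality of $\pi^{\ast}\xi$ to showing the classifying map $f\circ\pi\colon S^{5}\to BSO(n+1)$ is null-homotopic. By Lemma \ref{replemma} we may write $f=q^{\ast}(f')\cdot Bs_{\ast}(\alpha)$ for some $f'\in[S^{4},BSO(n+1)]$ and $\alpha\in H^{2}(\mathbb{C}P^{2};\mathbb{Z})$. Since $S^{5}$ is a suspension and $\pi_{5}(BSO(n+1))$ is an abelian group, precomposition with $\pi$ turns the coaction-action of (\ref{seseq}) into ordinary addition of the two pieces $f'\circ(q\circ\pi)$ and $Bs_{\ast}(\alpha)\circ\pi$ in $\pi_{5}(BSO(n+1))$, so it suffices to show each is null-homotopic.

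The factor $Bs_{\ast}(\alpha)\circ\pi$ is immediately null, since it is the image under $Bs_{\ast}$ of the class $\pi^{\ast}\alpha\in H^{2}(S^{5};\mathbb{Z})=0$. The substantive step is to show that $q\circ\pi\colon S^{5}\to S^{4}$ is itself null-homotopic, which would then give $f'\circ(q\circ\pi)\simeq\ast$ as well. The key observation is that $\pi$ is the attaching map of the six-cell of $\mathbb{C}P^{3}=\mathbb{C}P^{2}\cup_{\pi}e^{6}$, so collapsing $\mathbb{C}P^{1}$ identifies $\mathbb{C}P^{3}/\mathbb{C}P^{1}$ with the mapping cone $S^{4}\cup_{q\circ\pi}e^{6}$. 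Since the generator of $\pi_{5}(S^{4})=\mathbb{Z}/2$ is detected by $\mathrm{Sq}^{2}$ acting on the mod $2$ cohomology of its mapping cone, it is enough to show that $\mathrm{Sq}^{2}\colon H^{4}(\mathbb{C}P^{3}/\mathbb{C}P^{1};\mathbb{Z}/2)\to H^{6}(\mathbb{C}P^{3}/\mathbb{C}P^{1};\mathbb{Z}/2)$ vanishes.

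This reduces to a direct calculation. The long exact sequence of the pair $(\mathbb{C}P^{3},\mathbb{C}P^{1})$ shows the quotient map induces an isomorphism on mod $2$ cohomology in degrees $4$ and $6$, so the $\mathrm{Sq}^{2}$ in question is computed in $H^{\ast}(\mathbb{C}P^{3};\mathbb{Z}/2)=\mathbb{Z}/2[x]/(x^{4})$ with $|x|=2$. The Cartan formula together with $\mathrm{Sq}^{1}x=0$ and $\mathrm{Sq}^{2}x=x^{2}$ gives $\mathrm{Sq}^{2}(x^{2})=2x^{3}=0$, so $q\circ\pi\simeq\ast$, hence $\pi^{\ast}f\simeq\ast$, $\pi^{\ast}\xi$ is trivial, and $X\cong S^{n}\times S^{5}$. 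The main technical obstacle I foresee is the naturality bookkeeping in the first paragraph, namely justifying that the coaction-based action of (\ref{seseq}) decomposes additively after restriction along $\pi$; this should follow from the fact that $\pi$ and $q\circ\pi$ are precisely the two coordinate projections of $\mu\circ\pi\colon S^{5}\to\mathbb{C}P^{2}\vee S^{4}$, where $\mu$ is the coaction on $\mathbb{C}P^{2}$ and $S^{5}$ is a suspension.
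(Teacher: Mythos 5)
Your proof is correct and is essentially the paper's own argument: for $2\le n\le 4$ the paper makes exactly this reduction, showing $q\circ\pi$ is null homotopic because $\mathrm{Sq}^2$ vanishes on $H^4$ of the cofibre $\mathbb{C}P^3$ of $\pi$, and then kills the $\alpha$-factor via $H^2(S^5;\mathbb{Z})=0$; the only difference is that the paper disposes of $n\ge 5$ immediately via $\pi_5(BSO(n+1))\cong\pi_4(SO(n+1))=0$, whereas your $\mathrm{Sq}^2$ argument handles all $n\ge 2$ uniformly. The additivity-after-restriction step you flag is precisely the naturality of the coaction/action that the paper cites (\cite[Proposition 2.48]{Swi02}), giving $\pi^\ast\big(q^\ast(f')\cdot Bs_\ast(\alpha)\big)=(q\circ\pi)^\ast(f')+\pi^\ast\big(Bs_\ast(\alpha)\big)$ in $\pi_5(BSO(n+1))$; note that your ``determined by its two coordinate projections'' heuristic alone would not suffice, since $\pi_5(\mathbb{C}P^2\vee S^4)\to\pi_5(\mathbb{C}P^2)\oplus\pi_5(S^4)$ need not be injective, so one should appeal to that naturality statement rather than to the projections.
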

\begin{proof}
When $n\geq 5$, by Bott periodicity $\pi_5(BSO(n+1))\cong \pi_4(SO(n+1))=0$, and then the lemma follows in this case.
When $2\leq n\leq 4$, consider the diagram of homotopy cofibrations
\[
\xymatrix{
\ast  \ar[r]^<<<{} \ar[d]^{}  &
S^5 \ar@{=}[r] \ar[d]^{\pi} &
S^5   \ar[d]^{\pi^\prime}\\
S^2\ar[r]^{\rho}&
\mathbb{C}P^2\ar[r]^{q} &
S^4,
}
\]
where $\pi^\prime=q\circ \pi$. From (\ref{seseq}) it implies the morphism of short exact sequence
\[
 \xymatrix{
0 \ar[r] &  [S^4, BSO(n+1)]   \ar[r]^{q^\ast} \ar[d]^{(\pi^{\prime})^{\ast}}  &[\mathbb{C}P^2, BSO(n+1)] \ar[r]^{\rho^\ast} \ar[d]^{\pi^\ast} & [S^2, BSO(n+1)] \ar[r]  & 0\\
&  [S^5, BSO(n+1)]   \ar@{=}[r]^{}  &[S^5, BSO(n+1)].
}
\]
It is known that the homotopy cofibre of $\pi$ is $\mathbb{C}P^3$, and the Steenrod operation $Sq^2: H^4(\mathbb{C}P^3;\mathbb{Z}/2)\rightarrow H^6(\mathbb{C}P^3;\mathbb{Z}/2)$ is trivial. This is equivalent to that $\pi^\prime$ is null homotopic. In particular $(\pi^{\prime})^{\ast}=0$.

By Lemma \ref{replemma}, we know that $f=\Phi(f^\prime, \alpha)=q^\ast(f^\prime)\cdot  (Bs_\ast(\alpha))$ for some $f^\prime\in  [S^4, BSO(n+1)]$ and $\alpha\in [\mathbb{C}P^2, BS^1]$. Notice that $\pi^\ast (\alpha)=0$ as $H^2(S^5;\mathbb{Z})=0$.
Then by the naturality of the actions through the above diagram \cite{Swi02, Ark11}
\[
\begin{split}
\pi^\ast(f)
&=\pi^\ast(q^\ast(f^\prime)\cdot  (Bs_\ast(\alpha)))\\
&=(\pi^{\prime})^{\ast}(f^\prime) \cdot  (Bs_\ast(\pi^\ast (\alpha)))\\
&=0.
\end{split}
\]
It means that $\pi^\ast(\xi)$ is trivial and the lemma is proved.
\end{proof}

\begin{remark*}
Note the above proof is valid for any $G$ such that $\pi_2(G)=0$, which is the case for any compact Lie group. Hence, for a such $G$ we have showed for any principal $G$-bundle over $\mathbb{C}P^2$, its pullback along the standard projection $S^5\stackrel{}{\rightarrow}\mathbb{C}P^2$ is trivial.
\end{remark*}

We are ready to prove Theorem \ref{decomthm} when $d=1$.
\begin{proof}[Proof of Theorem \ref{decomthm} when $d=1$]
In this case, consider the circle bundle 
\[
S^1\stackrel{\jmath}{\longrightarrow} X\stackrel{\psi}{\longrightarrow} M
\]
in the middle row of Diagram (\ref{keydiag}). Since $X$ is simply connected, $\jmath$ is null homotopic. Therefore, $\Omega \psi$ has a left homotopy inverse $\Omega M\stackrel{r}{\rightarrow} \Omega X$, and then $\Omega M\simeq S^1\times \Omega X$. Since by Lemma \ref{pixilemma} $X\cong S^n \times S^5$, we can define the composition
\[
\Omega M\stackrel{r}{\longrightarrow} \Omega X\cong  \Omega (S^n \times S^5) \stackrel{q_1}{\longrightarrow} \Omega S^n,
\]
where $q_1$ is the obvious projection. By Diagram (\ref{keydiag}) we see that the composition is a left homotopy inverse of $\Omega S^n\stackrel{\Omega i}{\longrightarrow} \Omega M$. It follows that the sphere bundle in the rightmost column of Diagram (\ref{keydiag}) splits after looping to give a homotopy equivalence
\[
\Omega M\simeq \Omega S^n\times \Omega \mathbb{C}P^2 \simeq S^1 \times \Omega S^n\times \Omega S^5.\]
This completes the proof of the theorem when $d=1$.
\end{proof}


\section{The case when $d=0$} 
\label{sec: d=0}
Let $N$ be a simply connected $4$-manifold such that $H^2(N;\mathbb{Z})\cong 0$. Then $N$ is a homotopy equivalent to $S^4$, and we may let $N=S^4$ as we are only interested in their homotopy types. 
\begin{proof}[Proof of Theorem \ref{decomthm} when $d=0$]
In this case, the sphere bundle is $S^n \stackrel{i}{\longrightarrow} M\stackrel{p}{\longrightarrow} S^4$ with $n\geq 4$.
It induces the long exact sequence of homotopy groups
\[
\cdots \stackrel{}{\longrightarrow} \pi_4(M)\stackrel{p_\ast}{\longrightarrow} \pi_4(S^4)\cong \mathbb{Z}\stackrel{}{\longrightarrow} \pi_3(S^n)=0\stackrel{}{\longrightarrow} \cdots.
\]
In particular, $p^\ast$ is surjective, and then $p$ has a right homotopy inverse. It follows the sphere bundle splits after looping to give a homotopy equivalence
\[
\Omega M\simeq \Omega S^n\times \Omega S^4 \simeq S^3\times \Omega S^n\times \Omega S^7.\]
This completes the proof of the theorem when $d=0$.
\end{proof}

There are two cases which are excluded from the theorem: $(d, n)=(0, 2)$ or $(0, 3)$. Let us discuss them briefly.

When $(d, n)=(0, 3)$, we have the sphere bundle $S^3 \stackrel{i}{\longrightarrow} M\stackrel{p}{\longrightarrow} S^4$. Following the notation of Crowley-Escher \cite{CE03}, the total manifold $M$ is denoted by $M_{\rho, \gamma}$ if it is classified by $\rho z+\gamma w\in \mathbb{Z}\{z, w\}\cong \pi_3(SO(4))$, where the two generators $z$ and $w$ are defined by 
$
z(u)v=uvu^{-1}
$ 
and $w(u)v=uv$ for any $u$, $v\in S^3\cong Sp(1)$.

 Moreover, as cell complex 
\[
M_{\rho, \gamma}\simeq P^4(\gamma)\cup_{}e^7,
\]
where the Moore space $P^4(\gamma)$ is the mapping cone of the degree map $\gamma: S^3\rightarrow S^3$. In particular, if $\gamma=1$, $M_{\rho, 1}$ is a homotopy sphere. 

If $\gamma=0$, $M_{\rho, 0}\simeq (S^3\vee S^4)\cup_{}e^7$. By \cite[Section 26.6]{Ste99}, the bundle $S^3 \stackrel{i}{\longrightarrow} M_{\rho, 0}\stackrel{p}{\longrightarrow} S^4$ has a cross section, and hence it splits after looping to give a homotopy equivalence
\[
\Omega M_{\rho, 0}\cong \Omega S^3\times \Omega S^4.
\]

If $\gamma\geq 3$ and is odd, the loop decomposition of the Poincar\'{e} duality complex $P^4(\gamma)\cup e^7$ was determined by Theriault and the author \cite{HT22}. For any prime $p$, let $S^{m}\{p^r\}$ be the homotopy fibre of the degree $p^r$ map on $S^{m}$.
Let $\gamma=p_1^{r_1}\cdots p_\ell^{r_\ell}$ be the prime decomposition of $\gamma$. By \cite[Theorem 1.1]{HT22}, when $\gamma$ is odd there is a homotopy equivalence 
\[\label{HTthmeq}
\Omega M_{\rho, \gamma}\simeq \Omega(P^4(\gamma)\cup e^7)\simeq \prod_{j=1}^{\ell} S^3\{p_j^{r_j}\}\times \Omega S^7.
\]

If $\gamma\geq 2$ and is even, it is a hard problem to determine the loop decomposition of $M_{\rho, \gamma}$. Partial results are obtained in \cite[Section 6]{HT22}.

When $(d, n)=(0, 2)$, we have the sphere bundle $S^2 \stackrel{i}{\longrightarrow} M\stackrel{p}{\longrightarrow} S^4$. This case was studied in \cite[Section 5]{Hua22}. 
Let $x\in H^2(M;\mathbb{Z})$, $y\in H^4(M;\mathbb{Z})$ be two generators. Suppose
$
x^2=\gamma y
$
for some $\gamma\in \mathbb{Z}$.
Then it can be shown that \cite[Lemma 5.1]{Hua22} the circle bundle classified by $x$ is homotopy equivalent to a homotopy fibration 
\[\label{Xd=0def}
S^1\stackrel{j}{\longrightarrow} M_{\rho, \gamma} \stackrel{}{\longrightarrow} M,
\]
where $j$ is null homotopic.
Therefore, there is a homotopy equivalence
\[
\Omega M\simeq S^1\times \Omega M_{\rho, \gamma}.
\]
This case is then reduced to the case when $(d, n)=(0, 3)$, and with the results of \cite{HT22} we can prove a nice loop homotopy decomposition when $\gamma$ is odd, and partial results when $\gamma$ is even.

In the end of this section, we digress to give a counterexample demonstrating that $H_\ast(M)$ is not the tensor product $H_\ast(S^n)\otimes H_\ast(N)$.
\begin{example}\label{Hnonex}
Consider the pullback of the sphere bundle $M_{\rho,\gamma }$ along the quotient map $q$ 
\[
\begin{gathered}
 \xymatrix{
 S^3 \ar[r] \ar@{=}[d]  & M \ar[r] \ar[d] & \mathbb{C}P^2 \ar[d]^{q} \\
 S^3    \ar[r]              & M_{\rho,\gamma } \ar[r]^{} & S^4,
}
\end{gathered}
\]
which defines the manifold $M$. By the naturality of the Serre spectral sequence, it is easy to see that the homology of $M$ satisfies
\[
H_0(M;\mathbb{Z})\cong H_2(M;\mathbb{Z})\cong H_5(M;\mathbb{Z})\cong H_7(M;\mathbb{Z})\cong \mathbb{Z}, \ \ H_3(M;\mathbb{Z})\cong \mathbb{Z}/\gamma, \ \ H_i(M;\mathbb{Z})=0 ~{\rm for}~{\rm other}~i.
\]
In particular, it implies that as graded groups
\[
H_\ast(M;\mathbb{Z})\not\cong H_\ast(S^3;\mathbb{Z}) \otimes H_\ast(\mathbb{C}P^2;\mathbb{Z}).
\]
\end{example}


\bibliographystyle{amsalpha}

\begin{thebibliography}{}

\bibitem[Ark11]{Ark11} M. Arkowitz, Introduction to homotopy theory, \emph{Universitext}, Springer, New York, 2011. xiv+344 pp.

\bibitem[Bas19]{Bas19} Sa. Basu, The homotopy type of the loops on $(n-1)$-connected $(2n+1)$-manifolds, from ``Algebraic topology and related topics'' (M Singh, Y Song, J Wu, editors), Springer (2019) 1-25.

\bibitem[BB18]{BB18} Sa. Basu and So. Basu, Homotopy groups of highly connected manifolds, 
   \emph{Adv. Math} \textbf{337} (2018), 363-416. 
   
\bibitem[BT14]{BT14} P. Beben and S. Theriault, The loop space homotopy type of simply-connected four-manifolds and their generalizations, \emph{Adv. Math.} \textbf{262} (2014), 213-238. 

\bibitem[BT22]{BT22} P. Beben and S. Theriault, Homotopy groups of highly connected Poincar\'{e} duality complexes, \emph{Doc. Math.} \textbf{27} (2022), 183-211.

\bibitem[BW15]{BW15} P. Beben and J. Wu, The homotopy type of a Poincar\'{e} duality complex after looping, \emph{Proc. Edinb. Math. Soc.} \textbf{58} (2015), 581-616.

\bibitem[BS53]{BS53} R. Bott and H. Samelson, On the Pontryagin product in spaces of paths, \emph{Comment. Math. Helv.} \textbf{27} (1953), 320-337.

\bibitem[Che22]{Che22} S. Chenery, A homotopy theoretic analogue to a theorem of Wall, preprint, 2022, arXiv:2210.04548.

\bibitem[Che23a]{Che23a} S. Chenery, The rational homotopy type of homotopy fibrations over connected sums, \emph{Proc. Edin. Math. Soc.} \textbf{66} (2023), 133-142.

\bibitem[Che23b]{Che23b} S. Chenery, Loop space decompositions of connected sums and applications to the Vigu\'{e}-Poirrier Conjecture, preprint, 2023, arXiv:2304.12726. 

\bibitem[CE03]{CE03} D. Crowley and C. Escher, A classification of $S^3$-bundles over $S^4$,
\emph{Differential Geom. Appl.} \textbf{18} (2003), 363-380.    
  
\bibitem[DL05]{DL05} H. Duan and C. Liang, Circle bundles over $4$-manifolds, 
\emph{Arch. Math.} \textbf{85} (2005), 278-282. 

\bibitem[Hua22]{Hua22} R. Huang, Loop homotopy of $6$-manifolds over $4$-manifolds, \emph{Alg. Geom. Topol.} \textbf{23} (5) (2023), 2369-2388.

\bibitem[Hua24]{Hua24} R. Huang, Comparison techniques on inert top cell attachments, preprint, 2024, arXiv:2408.10716.

\bibitem[HT22]{HT22} R. Huang and S. Theriault, Loop space decompositions of $(2n-2)$-connected $(4n-1)$-dimensional Poincar\'{e} Duality complexes, \emph{Res. Math. Sci.} \textbf{9} 53 (2022). 

\bibitem[HT23]{HT23} R. Huang and S. Theriault, Homotopy of manifolds stabilized by projective spaces, \emph{J. Topology} \textbf{16} (3) (2023), 1237-1257.

\bibitem[HT24a]{HT24a} R. Huang and S. Theriault, Homotopy theoretic properties of open books, \emph{Quart. J. Math.} \textbf{75} (2024), 1201-1218. 

\bibitem[HT24b]{HT24b} R. Huang and S. Theriault, Homotopy of blow ups after looping, preprint, 2024, arXiv:2308.14531. 

\bibitem[HT25]{HT25} R. Huang and S. Theriault, Unstable Homotopy Decompositions, \emph{Series in Algebraic and Differential Geometry (Series Editor Phillip A Griffiths)} Vol 3, World Scientific Publishing, 358 pages+xvi, 2025.


\bibitem[JW54]{JW54} I. M. James and J. H. C. Whitehead, The homotopy theory of sphere bundles over spheres (\MyRoman{1}), \emph{Proc. London Math. Soc. 3} \textbf{4} (1954), 196-218.

\bibitem[JW55]{JW55} I. M. James and J. H. C. Whitehead, The homotopy theory of sphere bundles over spheres (\MyRoman{2}), \emph{Proc. London Math. Soc. 3} \textbf{5} (1955), 148-166. 

\bibitem[Mil56]{Mil56} J. Milnor, On manifolds homeomorphic to the $7$-sphere, \emph{Ann. of Math.} \textbf{64} (1956), 399-405.


\bibitem[ST24]{ST24} L. Stanton and S. Theriault, \emph{Polyhedral products associated to pseudomanifolds}, preprint, 2024, arXiv:2412.04116.

\bibitem[Ste99]{Ste99} N. Steenrod, The topology of fibre bundles, Vol. \textbf{27}, Princeton University Press, 1999.

\bibitem[Swi02]{Swi02} R. M. Switzer, Algebraic Topology - Homotopy and Homology, 1st edition, \emph{Classics in Mathematics}, Springer-Verlag, Berlin, Heidelberg, 2002.

\bibitem[The24a]{The24a} S. Theriault, Homotopy fibrations with a section after looping, \emph{Mem. Amer. Math. Soc.} Volume \textbf{299}, 101 pp, 2024.

\bibitem[The24b]{The24b} S. Theriault, Top cell attachment in a Poincar\'{e} duality complex, preprint, 2024, arXiv:2402.13775. 
\end{thebibliography}

\end{sloppypar}
\end{document}